\theoremstyle{plain}
\newtheorem{theorem}{Theorem}
\newtheorem*{lemma}{Lemma}
\newtheorem*{cor}{Corollary}
\theoremstyle{remark}
\newtheorem*{rem}{Remark}
\newtheorem{remark}{Remark}
\newtheorem*{ex}{Example}
\renewcommand{\a}{\alpha}
\newcommand{\A}{A}
\renewcommand{\ge}{\geqslant}
\renewcommand{\le}{\leqslant}
\renewcommand{\d}{\bar{\textup{d}}}
\DeclareMathOperator*{\dlim}{d-lim}
\title{On the asymptotic behavior of cocycles over flows}
\author{Maxim E. Lipatov}
\date{}
\begin{document}
\maketitle
%\vspace{-5mm}
\begin{abstract}
In 1968, V.I. Oseledets formulated the question of convergence in the Birkhoff theorem and the multiplicative ergodic theorem for measurable cocycles over flows under the condition of integrability for each individual~$t$. A.M. Stepin and the author established (2016) the convergence along subsets of time of density 1. In this note, we show that moreover the convergence is fulfilled modulo time subsets of finite measure.
\medskip

MSC2020: 37A10, 37A30.

Key words: cocycles, flows, Birkhoff ergodic theorem, Oseledets multiplicative ergodic theorem, Lyapunov exponents
\end{abstract}

\section{Introduction}
Let $\{T^t\}$ be a measure-preserving measurable flow on a Lebesgue space $(X,\mu)$ with $\mu(X) = 1.$
A {\it cocycle} over the flow $\{T^t\}$ with values in a group~$G$ is a measurable function $\alpha \colon\mathbb R\times X \to G$ such that $$\alpha (t + s, x) = \alpha(s, T^t x)\alpha(t, x)$$ for all $t,s\in\mathbb R$ and $x\in X.$\footnote{Similarly for semiflows.} For $G = \mathbb R,$ we have an additive cocycle: $$ \alpha (t + s, x) = \alpha (t, x) + \alpha (s, T^t x);$$ and if $\a$ is absolutely continuous with respect to $t,$ then $\a(t, x) = \int_0^t f (T^t x) \, dt $ for some measurable function $f$.\footnote {As a function $f,$ one can take $f(x) = \varlimsup \limits_{n \to \infty} \frac{\a (\varepsilon_n, x)}{\varepsilon_n}$ for some sequence $\varepsilon_n \to0. $}
The Oseledets multiplicative ergodic theorem (MET) \cite{O} generalizes the statement about convergence of the means $\a (t, x)/t $ of such cocycles (the Birkhoff theorem) to the non-commutative case. According to MET, for an arbitrary measurable cocycle $\A\colon\mathbb R \times X \to GL (m, \mathbb R)$ satisfying the condition
\begin{equation}
\sup\limits_{0\le t \le1} \ln ^ + \|A (t, x) ^ {\pm1} \|\in L^1 (X, \mu), \label{int}
\end{equation}
almost all points with respect to the invariant measure $\mu$ are Lyapunov regular. This implies that almost everywhere there are exact Lyapunov exponents as well as the block structure of the cocycle: the vector bundle $X\times\mathbb R^m$ is decomposed into a direct sum of invariant subbundles corresponding to distinct Lyapunov exponents. If $X$ is a smooth compact manifold and $\{T^t\}$ is a flow of class $C^1$ preserving the smooth measure $\mu$ on $X,$ up to a set of measure zero the tangent bundle $TX$ admits trivialization and the differential of the flow $(t, x)\mapsto A(t, x):=D_xT^t$ is a cocycle satisfying condition \eqref{int}. This explains the great importance of MET for the theory of dynamical systems, especially for nonuniformly hyperbolic theory (see \cite{BP}). MET was generalized for local fields \cite{R}, for Hilbert \cite{Ru82, GM} and Banach \cite{M, T} spaces, spaces of non-positive curvature \cite{K87, KM}.
\par In the one-dimensional case, a sufficient condition for convergence of the means of an additive cocycle $\a(t, x)$, similar to \eqref{int}, has the form
\begin {equation}
\sup\limits_{0 \le t\le1}|\a (t, x)|\in L^1 (X, \mu). \label{int2}
\end{equation}
In the work \cite{O}, V.I. Oseledets posed the question about convergence in the Birkhoff theorem and MET under the condition of integrability for each individual $t.$
In the joint work of the author and A.M. Stepin \cite{LS}, it was shown that although convergence in all $t$ under these conditions may not hold, it does along density 1 time subsets depending on $x \in X.$ \footnote{There was used the argument with the function $ \varphi(x) = x^{-1/2} $ on page \pageref{phi} below.} In particular, there exist exact Lyapunov exponents for cocycles in the sense of the specified convergence almost everywhere. Such generalized Lyapunov exponents seem natural from an applied point of view since they are not sensitive to rare outliers.

Here we prove a stronger statement: the convergence persists if a set of finite Lebesgue measure (depending on x) is discarded from the time axis.

\section {Birkhoff theorem}

Here are some examples that demonstrate the possible behavior of the Birkhoff means of additive cocycles. We first recall the construction of a suspension flow over an automorphism $S$ of a Lebesgue space $(Y,\nu)$ with a measurable roof function $f\colon Y\to\mathbb R_{+},$ $\int f d\nu=1,$ $f(y)\ge C>0.$ Such a flow $\{T^t\}$ acts on the space $X=\{(y,\tau)\in Y\times\mathbb R:0\le\tau<f(y)\}$ with the measure $d\mu=d\nu\,dt$ by the formula $$T^t(y,\tau)=\begin{cases}
(y,\tau+t),&0\le\tau+t<f(y),\\
(S^ny,\tau+t-f_n(y)),&f_n(y)\le\tau+t<f_{n+1}(y),
\end{cases}$$
where $f_n(y):=\sum\limits_{i=0}^{n-1}f(S^iy).$ In this case, one can always go to an isomorphic flow with $C\le f(y) \le 2C$ \cite{Ro}.

\begin{ex} Let $\{T^t\}$ be an arbitrary flow on $(X,\mu)$ without fixed points. (At the fixed points the means $\a(t, x)/t$ obviously converge.) Consider a coboundary, i.e., a cocycle of the form $\a(t,x)=h(T^tx)-h(x)$ with a function $h\in L^1,$ whose values along the trajectory $h(T^tx)$ are unbounded on each of consecutive time intervals.
The existence of such a function follows from the suspension representation of the flow. The flow under consideration is isomorphic to at most a countable sum of suspension flows with decreasing $C$. For each such suspension, we define a function on its space unbounded on each fiber $\{y\}\times[0, f(y))$. This gives an example of an integrable cocycle whose means do not converge everywhere.
\end{ex}

In the same way, one can construct a similar example of a cocycle not cohomologous to 0 using the following statement.

\begin{lemma} Any cocycle $\alpha\colon\mathbb R\times X\to \mathbb R$ over a suspension flow $\{T^t\}$ is uniquely determined by the values $\alpha(t,(y,0)),$ $0\le t\le f(y),$ $y\in Y$.
\end{lemma}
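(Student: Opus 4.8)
The plan is to reconstruct $\alpha(t,x)$ for an arbitrary $x=(y,\tau)\in X$ and arbitrary $t\in\mathbb R$ as a finite algebraic combination of the prescribed values $\alpha(s,(z,0))$, $0\le s\le f(z)$. First I record the standard consequences of the cocycle identity, which hold here because it is assumed valid for all $t,s,x$: putting $t=s=0$ gives $\alpha(0,x)=0$, and then $\alpha(-t,T^tx)=-\alpha(t,x)$ for all $t,x$. In particular, for $t<0$ one has $\alpha(t,x)=-\alpha(-t,T^{t}x)$ with $-t>0$, so it is enough to treat nonnegative times.

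Next, for $x=(y,\tau)$ with $0\le\tau<f(y)$ I would use $x=T^\tau(y,0)$ and the cocycle relation to obtain $\alpha(t,(y,\tau))=\alpha(t+\tau,(y,0))-\alpha(\tau,(y,0))$ for every $t$; the term $\alpha(\tau,(y,0))$ is already one of the prescribed values, so everything reduces to evaluating $\alpha(s,(y,0))$ for $s\ge 0$ (apply this when $s=t+\tau\ge 0$, and otherwise first pass to positive times via the previous paragraph).

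It remains to express $\alpha(s,(y,0))$, $s\ge 0$, through the data. Since $f\ge C>0$, the sums $f_n(y)$ increase to $+\infty$, so there is a unique $n\ge 0$ with $f_n(y)\le s<f_{n+1}(y)$, and $T^{f_n(y)}(y,0)=(S^ny,0)$. Telescoping the cocycle identity along the points $(y,0),(Sy,0),\dots,(S^ny,0)$ yields $\alpha(f_n(y),(y,0))=\sum_{i=0}^{n-1}\alpha(f(S^iy),(S^iy,0))$, and one further application gives
\[
\alpha(s,(y,0))=\alpha\bigl(s-f_n(y),(S^ny,0)\bigr)+\sum_{i=0}^{n-1}\alpha\bigl(f(S^iy),(S^iy,0)\bigr),
\]
where every term on the right is admissible: $0\le s-f_n(y)<f(S^ny)$, and each $f(S^iy)$ is the right endpoint of the admissible time interval at $S^iy$. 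Chaining the three reductions determines $\alpha$ uniquely from the prescribed values.

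I do not anticipate a genuine difficulty; the argument is essentially bookkeeping with the cocycle identity. The only points requiring a word of care are the use of $f\ge C>0$ to guarantee $f_n(y)\to\infty$ (so that the index $n$ always exists), and the fact that the flow and cocycle relations are postulated for all $t,s,x$ rather than almost everywhere, which is what makes a pointwise reconstruction legitimate.
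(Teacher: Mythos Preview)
Your argument is correct and follows essentially the same route as the paper: reduce $\alpha(t,(y,\tau))$ to $\alpha(\tau+t,(y,0))-\alpha(\tau,(y,0))$ via the cocycle identity, then telescope $\alpha(s,(y,0))$ along the base points $(S^iy,0)$ to express it through the prescribed values. You add a bit more care about negative times and the role of $f\ge C>0$, but the substance is identical.
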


\begin{proof} Indeed, if $\alpha\colon\mathbb R\times X\to \mathbb R$ is a cocycle, then for $0\le\tau<f(y)$ and $f_n(y)\le\tau+t<f_{n+1}(y)$
we have $$\a(t,(y,\tau))=\a(\tau+t,(y,0))-\a(\tau,(y,0))=\sum_{i=0}^{n-1}\a(f(S^iy),(S^iy,0))+$$
$$+\a\bigl(\tau+t-f_n(y), (S^ny,0)\bigr)-\a(\tau,(y,0)).$$
It is easy to verify that the function $\a$ given by this formula is a cocycle.
\end{proof}

\begin{rem} For ergodic $S,$ a cocycle locally bounded in $t$ with the same property can be obtained by setting
$$\tilde\alpha(t,(y,0))=\min(\alpha(t,(y,0)),h(y)),$$ where $h(y)=N_k^2$ for $y\in A_k\setminus A_{k+1}$ and $$A_k\searrow\emptyset,\ \nu\Bigl(\bigcup_{n=0}^{N_k}S^{-n}A_k\Bigr)>1-\varepsilon_k,\  \varepsilon_k\searrow0.$$
\end{rem}

\begin{ex} Condition \eqref{int2} is not necessary for convergence $\mu$-a.e. of ratios $\a(t,x)/t$ as $t\to\infty$ even for ergodic flows. As an example, we can take a suspension flow built over an ergodic basic transformation $S\colon[0,1]\to[0,1]$ preserving Lebesgue measure, under the function $f(y)=y^{-2/3},$ and define the cocycle by the formula $$\alpha(t,(y,0))=\begin{cases}\sqrt{n},& t=n\in\mathbb N\cap[0,f(y)],\\ 0,& t\in\mathbb N^c\cap[0,f(y)].\end{cases}$$
\end{ex}

Although the means of an integrable cocycle may not converge, it turns out that convergence in density takes place.

The {\it upper density} of the Borel set $\tau\subset\mathbb R_+$ is the limit $$\d(\tau)=\varlimsup_{t\to\infty}\frac{\lambda(\tau\cap[0,t])}{t},$$
where $\lambda$ is Lebesque measure.
The function $f(t)$ {\it converges in density} to $l$ as $t\to\infty$ (we will write $\dlim\limits_{t\to\infty}f(t)=l$) if there exists a set $\tau\subset\mathbb R_+$ of density 0 such that $$\lim\limits_{t\to\infty,\ t\notin\tau}f(t)=l,$$ or equivalently $$\forall\varepsilon>0\ \ \d\{t>0:|f(t)-l|\ge\varepsilon\}=0.$$
In our case, the set $\tau$ will have a finite Lebesgue measure.
\par Below in this section, we will consider measure-preserving semiflows~$\{T^t\}_{t\ge0}.$

\begin{theorem}\label{add} If $\a(t,x)\in L^1(X,\mu)$ for each $t,$ then almost everywhere there exists the limit $$\dlim\limits_{t\to\infty}\frac{\a(t,x)}{t}=\beta(x),$$ where the function $\beta$ is measurable, $T^t$-invariant, and $$\int\beta(x)\,d\mu(x)=1/t\int\a(t,x)\,d\mu(x).$$ Moreover, the neglected subset of the time axis can be chosen measurably depending on $x$ and having a finite Lebesgue measure for each $x$.
\end{theorem}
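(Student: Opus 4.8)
The plan is to separate time into its integer and fractional parts. For $t>1$ write
$$\frac{\a(t,x)}{t}=\frac{\lfloor t\rfloor}{t}\cdot\frac{\a(\lfloor t\rfloor,x)}{\lfloor t\rfloor}+\frac{\a(t-\lfloor t\rfloor,\,T^{\lfloor t\rfloor}x)}{t}.$$
Since $g:=\a(1,\cdot)\in L^1(X,\mu)$ and $\a(n,x)=\sum_{k=0}^{n-1}g(T^kx)$, the Birkhoff theorem for the measure-preserving transformation $T^1$ gives, for $\mu$-a.e.\ $x$, a $T^1$-invariant limit $\beta(x)=\lim_n\a(n,x)/n$ with $\int\beta\,d\mu=\int g\,d\mu$, so the first summand tends to $\beta(x)$ as $t\to\infty$ with no exceptional time set at all. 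The theorem thus reduces to producing, for a.e.\ $x$, a set $\tau_x\subset\mathbb R_+$ of finite Lebesgue measure, measurable in $x$, off which $\a(t-\lfloor t\rfloor,T^{\lfloor t\rfloor}x)/t\to0$.

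For $\varepsilon>0$ put $E_\varepsilon(x)=\{t>1:|\a(t-\lfloor t\rfloor,T^{\lfloor t\rfloor}x)|\ge\varepsilon t\}$. The crucial claim is that $\lambda(E_\varepsilon(x))<\infty$ for a.e.\ $x$ and every $\varepsilon>0$. Granting it, the theorem follows by routine bookkeeping: with $\varepsilon_k=1/k$, choose for a.e.\ $x$ the integers $T_k(x)=\min\{N\in\mathbb N:\lambda(E_{1/k}(x)\cap(N,\infty))<2^{-k}\}$ (a measurable function of $x$), and set $\tau_x=\bigcup_k E_{1/k}(x)\cap(T_k(x),\infty)$; then $\lambda(\tau_x)<1$, $(x,t)\mapsto\mathbf 1_{\tau_x}(t)$ is measurable, and off $\tau_x$ the fractional term goes to $0$, so $\dlim_{t\to\infty}\a(t,x)/t=\beta(x)$ in the required sense. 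Invariance of $\beta$ under the whole semiflow follows from $\a(t,T^sx)/t=\frac{t+s}{t}\cdot\a(t+s,x)/(t+s)-\a(s,x)/t$: letting $t\to\infty$ off the finite-measure set $(\tau_x-s)\cup\tau_{T^sx}$, the left side tends both to $\beta(x)$ and to $\beta(T^sx)$, so $\beta\circ T^s=\beta$ a.e.\ for every $s$, and a standard argument makes $\beta$ literally invariant; finally $t\mapsto\int\a(t,x)\,d\mu$ is additive and measurable, hence linear, giving $\frac1t\int\a(t,x)\,d\mu=\int g\,d\mu=\int\beta\,d\mu$.

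It remains to bound $\lambda(E_\varepsilon(x))$. Slicing by $n=\lfloor t\rfloor$ and using $t\ge n$ on $[n,n+1)$,
$$\lambda(E_\varepsilon(x))\le\sum_{n\ge1}\lambda\{u\in[0,1):|\a(u,T^nx)|\ge\varepsilon n\}.$$
A per-interval Chebyshev bound, $\lambda\{u\in[0,1):|\a(u,T^nx)|\ge\varepsilon n\}\le(\varepsilon n)^{-1}\int_0^1|\a(u,T^nx)|\,du$, is useless here since the resulting series diverges a.e.\ (by Birkhoff its partial sums grow like $N$). The trick is to integrate the displayed sum over $X$ and exchange summation with integration before applying Chebyshev: since $T^n$ preserves $\mu$,
$$\int_X\lambda(E_\varepsilon(x))\,d\mu\le\int_0^1\Bigl(\sum_{n\ge1}\mu\{x:|\a(u,x)|\ge\varepsilon n\}\Bigr)du\le\frac1\varepsilon\int_0^1\|\a(u,\cdot)\|_1\,du,$$
because $\sum_{n\ge1}\mu\{|\a(u,\cdot)|\ge\varepsilon n\}\le\varepsilon^{-1}\|\a(u,\cdot)\|_1$ for each fixed $u$. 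I expect the main obstacle to be showing that the last integral is finite, i.e.\ that $\int_0^1|\a(u,x)|\,du\in L^1(X,\mu)$; this is precisely where the hypothesis of integrability "for each individual $t$" must be exploited fully, since $u\mapsto\|\a(u,\cdot)\|_1$ is a priori only a measurable subadditive function and such functions can fail to be integrable near $u=0$. I would try to secure it by subtracting from $\a$ a suitable $L^1$-coboundary so as to reduce to a cocycle satisfying the stronger condition \eqref{int2} (e.g.\ the standard-form cocycle of the suspension picture, which is constant along the fibres), the coboundary being then handled by the same tail-summation argument applied to an honest $L^1$-function. Once $\int_0^1|\a(u,x)|\,du\in L^1$ is in hand, $\lambda(E_\varepsilon(x))<\infty$ a.e.\ follows from the displayed estimate and the proof is complete.
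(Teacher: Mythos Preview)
Your argument is essentially the paper's own proof: the same integer/fractional decomposition, the same sets $E_\varepsilon(x)=\bigcup_n\Delta_n^\varepsilon(x)$, the same Fubini--Chebyshev bound $\int_X\lambda(E_\varepsilon)\,d\mu\le\varepsilon^{-1}\int_0^1\|\a(u,\cdot)\|_{L^1}\,du$, and the same diagonal construction of $\tau_x$ from the tails of the $E_{1/k}$. The only place you diverge is at the very end, where you flag finiteness of $\int_0^1\|\a(u,\cdot)\|_{L^1}\,du$ as an obstacle and propose reducing to condition~\eqref{int2} by subtracting a coboundary.

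That detour is unnecessary, and it is the one genuine gap in your write-up. The paper simply observes that $\phi(u):=\|\a(u,\cdot)\|_{L^1}$ is a measurable subadditive function of $u$ and cites the standard fact (Kuczma, p.~461) that such a function is locally bounded, whence $\sup_{0\le u\le 1}\phi(u)<\infty$ and your integral is finite. Your worry that a measurable subadditive function can blow up at $0$ (like $1/u$) is legitimate in the abstract, but here the cocycle identity $\a(t,T^sx)=\a(t+s,x)-\a(s,x)$ together with $T^s_*\mu=\mu$ gives the companion inequality $\phi(t)\le\phi(t+s)+\phi(s)$; taking $s=1$ reduces boundedness on $(0,1]$ to boundedness on $[1,2]$, which is the classical statement for measurable subadditive functions on compact subintervals of $(0,\infty)$. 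So the ``main obstacle'' dissolves in one line, and no coboundary surgery is needed.
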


\begin{proof}
 Since $$\alpha(t,x)=\sum_{n=0}^{[t]-1}\alpha(1,T^nx)+\alpha(\{t\},T^{[t]}x),$$ by the Birkhoff theorem for an automorphism, this convergence is equivalent to convergence of $\alpha(\{t\},T^{[t]}x)/t.$

We have $$\int|\alpha(t+s,x)|\,d\mu(x)\le\int|\alpha(t,x)|\,d\mu(x)+\int|\alpha(s,T^tx)|\,d\mu(x)=$$ $$=\int|\alpha(t,x)|\,d\mu(x)+\int|\alpha(s,x)|\,d\mu(x).$$
Therefore, the measurable, subadditive function $t\mapsto\int|\alpha(t,x)|\,d\mu(x)$ is locally bounded (\cite{K}, p. 461).

Denote $$\Delta_n^{\varepsilon}(x):=\left\{t\in[n,n+1): \frac{|\a(\{t\},T^nx)|}{n}\ge\varepsilon\right\}.$$
Note that the series with positive terms $\sum_n\lambda(\Delta_n^{\varepsilon}(x))$ converges almost everywhere since the series of integrals converges:
$$\sum_n\int_X\lambda(\Delta_n^{\varepsilon}(x))\,d\mu(x)=\sum_n\int_X\int_0^1I_{\{|\a(t,x)|\ge\varepsilon n\}}\,dt\,d\mu(x)=$$
$$=\sum_n\int_0^1\mu\{|\a(t,x)|\ge\varepsilon n\}\,dt\le\frac{1}{\varepsilon}\int_0^1\int_X|\a(t,x)|\,d\mu(x) dt\le$$ $$\le\frac{1}{\varepsilon}\sup\limits_{0\le t\le1}\int\limits_X|\alpha(t,x)|\,d\mu(x)<\infty.$$
Let $\tau_n(x)=\cup_k\Delta_k^{\varepsilon_n}(x),$ where $\varepsilon_n$ is some sequence decreasing to zero and
$$k_n(x):=\min\left\{k: \lambda(\tau_n(x)\cap[k,\infty))\le\frac{1}{2^n}\right\}.$$ Then, as the required set of finite measure, on whose complement convergence holds, we can take
$$\tau(x)=\bigcup_n\{\tau_n(x)\cap[k_{n-1}(x),k_n(x))\}.$$
\end{proof}

\begin{cor} Under the condition $\a(t,x)\in L^1(X,\mu)$ (for all $t$) the lattice limit $\lim\limits_{n\to\infty}\a(nh,x)/nh$ does not depend on the lattice spacing $h.$
\end{cor}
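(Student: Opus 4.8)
The plan is to identify the lattice limit with the density-limit $\beta(x)$ furnished by Theorem~\ref{add}, which visibly does not involve $h$. First, for a fixed spacing $h>0$ the lattice limit exists a.e.: applying the Birkhoff theorem to the automorphism $T^h$ and the function $\a(h,\cdot)\in L^1(X,\mu)$, and using the telescoping identity $\sum_{j=0}^{n-1}\a(h,T^{jh}x)=\a(nh,x)$, one gets $\a(nh,x)/(nh)\to\gamma_h(x)$ a.e., where $h\gamma_h=E_\mu[\a(h,\cdot)\mid\mathcal I_h]$ and $\mathcal I_h$ is the $\sigma$-algebra of $T^h$-invariant sets. So it suffices to prove $\gamma_h=\beta$ a.e.

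To do that I would simply rerun the proof of Theorem~\ref{add}, with the integer lattice replaced by $h\mathbb Z$. One decomposes $\a(t,x)=\sum_{j=0}^{[t/h]-1}\a(h,T^{jh}x)+\a\bigl(t-h[t/h],\,T^{h[t/h]}x\bigr)$ and introduces $\Delta_n^{\varepsilon,h}(x)=\bigl\{t\in[nh,(n+1)h):|\a(t-nh,T^{nh}x)|\ge\varepsilon nh\bigr\}$. The same chain of estimates as in the proof above --- integrate over $x$, substitute $s=t-nh$, invoke $T^{nh}$-invariance of $\mu$, the elementary bound $\sum_{n\ge1}\mu\{|\a(s,x)|\ge\varepsilon nh\}\le\frac1{\varepsilon h}\int_X|\a(s,x)|\,d\mu$, and local boundedness of the measurable subadditive function $s\mapsto\int_X|\a(s,x)|\,d\mu$ on $[0,h]$ --- yields $\sum_n\lambda(\Delta_n^{\varepsilon,h}(x))<\infty$ for a.e.\ $x$. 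Thus $\bigcup_n\Delta_n^{\varepsilon,h}(x)$ has density $0$ (in fact finite measure), and for $t$ outside it the remainder term above is $<\varepsilon t$, so $\a(t,x)/t$ differs by less than $\varepsilon$ from a Birkhoff average of $\a(h,\cdot)$ along the $T^h$-orbit of $x$, normalized by $t$; this average tends to $\gamma_h(x)$ as $t\to\infty$. Letting $\varepsilon\to0$ shows that $\a(t,x)/t$ converges in density to $\gamma_h(x)$.

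Now $\a(t,x)/t$ also converges in density to $\beta(x)$ by Theorem~\ref{add}, and the density-limit is unique: if $\beta(x)\ne\gamma_h(x)$, the two exceptional sets of density $0$ attached to the error $\delta=|\beta(x)-\gamma_h(x)|/2$ would together cover all $t>0$, which is impossible. Hence $\gamma_h(x)=\beta(x)$ a.e., and since $\beta$ is independent of $h$ so is the lattice limit.

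I expect no serious obstacle here: the only thing to verify is that each ingredient of the proof of Theorem~\ref{add} --- the subadditivity/local-boundedness input and the Borel--Cantelli-type estimate on the $\Delta$-sets --- is insensitive to the lattice spacing, which it is. (For commensurable $h_1,h_2$ one could instead compare the two lattice limits directly through a common refinement $h_0\mathbb Z\supset h_1\mathbb Z,\,h_2\mathbb Z$ and the fact that subsequences of a convergent sequence share its limit; but the incommensurable case seems to require exactly the density argument above, so routing everything through $\beta$ is the cleanest.)
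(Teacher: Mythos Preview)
Your argument is correct and is precisely the intended one: the paper states this as an immediate corollary of Theorem~\ref{add} without separate proof, because the proof of Theorem~\ref{add} itself identifies the density limit $\beta(x)$ with the lattice limit $\lim_{n}\a(n,x)/n$ (the remainder $\a(\{t\},T^{[t]}x)$ vanishes on the integer lattice), and nothing in that proof is special to spacing $1$. Your rerun with $h\mathbb Z$ and the uniqueness of the density limit make this explicit.
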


\begin{remark} Obviously, the discarded subset of the $t$ axis in Theorem \ref{add} can be chosen of arbitrary small measure.
\end{remark}

\begin{remark} Unlike the case of absolutely continuous cocycles, there is no local ergodic theorem for arbitrary measurable cocycles, as we see from the example of the Brownian motion cocycle: the process $B^t/t=sB^{1/s} = \widetilde B^s $ is itself a Brownian motion and does not converge as $s\to\infty$ in any sense.
\end{remark}

As a corollary of Theorem \ref{add}, we obtain one else asymptotic property.

\begin{theorem} Let $\{T^t\}$ be an ergodic semiflow, $\a(t,x)\in L^1(X,\mu)$ for each~$t,$ and $\int\a(t,x)\,d\mu(x)=0$ (if this is true for one $t,$ then for all). Then
$$\lambda\{t>0: |\a(t,x)|\le\varepsilon\}=\infty.$$
\end{theorem}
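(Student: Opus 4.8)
The plan is to read off from Theorem~\ref{add} that the cocycle has vanishing asymptotic mean, and then to convert this into the recurrence statement by a skew--product argument. By Theorem~\ref{add} there is a $T^t$--invariant measurable function $\beta$ with $\dlim_{t\to\infty}\alpha(t,x)/t=\beta(x)$ and $\int\beta\,d\mu=\tfrac1t\int\alpha(t,x)\,d\mu=0$; since $\{T^t\}$ is ergodic, $\beta$ is constant, so $\beta\equiv 0$. To make ``no drift'' quantitative I would pass to the skew--product semiflow $\widehat T^{\,t}(x,y)=(T^tx,\,y+\alpha(t,x))$ on the $\sigma$--finite space $(X\times\mathbb R,\ \mu\times\lambda)$; the cocycle identity makes each $\widehat T^{\,t}$ measure preserving.

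The heart of the matter is that $\widehat T^{\,t}$ is \emph{conservative}. For an ergodic base and an integrable cocycle of vanishing integral this is Atkinson's recurrence theorem. Concretely, I would pick $h>0$ with $T^h$ ergodic (all but countably many $h$ work; if $\{T^t\}$ is non--invertible one first passes to its natural extension), apply the classical statement to the automorphism $T^h$ and the zero--integral function $\alpha(h,\cdot)\in L^1(X,\mu)$ to obtain conservativity of $(x,y)\mapsto(T^hx,\,y+\alpha(h,x))$, and note that a dissipative flow has a dissipative time--$h$ map, so conservativity of the time--$h$ map forces it for the whole semiflow. (When $\alpha$ is a coboundary $\varphi(T^tx)-\varphi(x)$ no Atkinson is needed: the graph $y=\varphi(x)$ carries the probability measure $(\mathrm{id},\varphi)_*\mu$, and ordinary Poincar\'e recurrence for $\{T^t\}$ already suffices, in the spirit of Example~1.) I expect this conservativity step to be the only genuinely substantial point; it is also where an argument modelled on the summable sets $\Delta_n^{\varepsilon}$ of Theorem~\ref{add} might be made to supplant the appeal to Atkinson.

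Granting conservativity, fix $\varepsilon>0$ and apply Hopf's recurrence theorem (in its continuous--time form) to $A=X\times[-\varepsilon/2,\varepsilon/2]$, a set of positive finite $\mu\times\lambda$--measure: for $\mu\times\lambda$--almost every $(x,y)\in A$ the orbit spends infinite total time in $A$, that is, $\int_0^\infty I_A(\widehat T^{\,t}(x,y))\,dt=\lambda\{t\ge 0:\,|\alpha(t,x)+y|\le\varepsilon/2\}=\infty$. By Fubini, for $\mu$--almost every $x$ this holds for Lebesgue--almost every $y\in[-\varepsilon/2,\varepsilon/2]$; fixing such an $x$ and one such $y$, the inclusion $\{t:\,|\alpha(t,x)+y|\le\varepsilon/2\}\subseteq\{t:\,|\alpha(t,x)|\le\varepsilon\}$ (from $|\alpha(t,x)|\le|\alpha(t,x)+y|+|y|$) gives $\lambda\{t>0:\,|\alpha(t,x)|\le\varepsilon\}=\infty$. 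Intersecting the resulting full--measure sets over $\varepsilon=1/n$ yields the assertion for $\mu$--almost every $x$ and all $\varepsilon>0$ simultaneously.
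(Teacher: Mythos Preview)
Your skew--product conservativity argument via Atkinson and Hopf is correct and is precisely the route the paper points to: the paper supplies no proof here beyond the single sentence that the statement ``generalizes \ldots\ the Atkinson theorem \ldots\ and is proved in the same way as in the absolutely continuous case~[Sh].'' The only soft spots are the semiflow technicalities (choosing $h$ with $T^h$ ergodic, passage to the natural extension) that you already flag, and these are routine; the Hopf step together with the Fubini/triangle--inequality transfer from $\{|\alpha(t,x)+y|\le\varepsilon/2\}$ to $\{|\alpha(t,x)|\le\varepsilon\}$ is clean.
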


This statement generalizes the well-known fact about the recurrence of random walks on the line and its ergodic analogue, the Atkinson theorem \cite{A}, and is proved in the same way as in the absolutely continuous case \cite{Sh}.

We also give some more explicit than in Theorem \ref{add} constructions of suitable sets $\tau(x)$ of density 0, which, however, can have infinite measure.
Put
$$\tau(x)=\bigcup_n\Delta_n(x),\ \Delta_n(x):=\left\{t\in[n,n+1): |\a(\{t\},T^nx)|\ge \frac{n}{\varphi(n)}\right\},$$
where $\varphi$ is a monotone function increasing to $\infty$. Obviously, we have
\begin{equation}\frac{\a(\{t\},T^{[t]}x)}{t}\to0,\ t\to\infty,\ t\notin\tau(x).\label{lim}
\end{equation}
Let us estimate the growth of the measure $\lambda(\tau(x)\cap[0,t])$. Note that the cocycle $\a(t, x)$ is locally integrable over $t$ for almost all $x$ since, by the Tonelli theorem, $$\int\limits_X\int\limits_0^t|\alpha(s,x)|\,ds\,d\mu(x)=\int\limits_0^t\int\limits_X|\alpha(s,x)|\,d\mu(x)ds\le t\sup\limits_{0\le s\le t}\int\limits_X|\alpha(s,x)|\,d\mu(x)<\infty$$
due to the local boundedness of the function $t\mapsto\int|\alpha(t,x)|\,d\mu(x).$
It follows from the Birkhoff theorem for discrete time that there is a constant~$C,$ depending on $x,$ such that for each $n$
$$C(x)n\ge\sum_{k=1}^{n}\int_0^1|\a(s,T^kx)|\,ds\ge\sum_{k=1}^{n}\frac{k}{\varphi(k)}\lambda(\Delta_k(x)).$$
Applying the Abel transform to the last expression, for $S_n(x)=\sum_{k=1}^n\lambda(\Delta_k(x)),$ we get
$$S_n(x)\le\varphi(n)\Bigl(C(x)+\frac{1}{n}\sum_{k=1}^n\Bigl(\frac{k}{\varphi(k)}-\frac{k-1}{\varphi(k-1)}\Bigr)S_{k-1}(x)\Bigr).$$
Hence by induction,
$$S_n(x)\le C(x)\sum_{k=1}^n\frac{\varphi(k)}{k}=O(\varphi(n)\ln n).$$
We thus obtain an ``almost logarithmic'' estimate for the growth of the set $\tau$: $$\lambda(\tau(x)\cap[0,t])=O(\varphi(t)\ln t)).$$
The same is true for the set $\tau$ constructed from $\Delta_n$ of the form
$$\Delta_n(x)=\left\{t\in[n,n+1): |\a(\{t\},T^nx)|\ge \frac{n}{\varphi\bigl(\frac{f(T^nx)}{n}\bigr)}\right\},$$
$$f(x)=\int_0^1|\a(s,x)|\,ds,$$
 where $\varphi$ is a convex function such that $\lim_{x\to0+}\varphi(x)=\infty$ and $\lim_{x\to0+}x\varphi(x)=0.$ E.g., $\varphi(x)=x^{-1/2}.$\label{phi}

Since, by the Borel-Cantelli lemma, $f(T^nx)/n\to0,\ n\to\infty,$ almost everywhere,
it follows that~\eqref{lim} holds. Also,
$$\lambda(\Delta_n(x))\le\frac{f(T^nx)}{n}\varphi\Bigl(\frac{f(T^nx)}{n}\Bigr)\to0,\ n\to\infty,$$
which implies that the set $\tau$ has density 0. By Jensen's inequality, $$\lambda(\tau(x)\cap[0,n+1])\le\sum_{k=1}^{n}\frac{f(T^kx)}{k}\varphi\Bigl(\frac{f(T^kx)}{k}\Bigr)\le\Bigl(\sum_{k=1}^{n}\frac{f(T^kx)}{k}\Bigr)\varphi\Bigl(\frac{1}{n}\sum_{k=1}^{n}\frac{f(T^kx)}{k}\Bigr).$$
Using the Abel transform and the Birkhoff theorem, one can obtain the asymptotics
$$\sum_{k=1}^{n}\frac{f(T^kx)}{k}\asymp \ln n,$$ from which it follows that the above estimate is almost logarithmic.

Note that an analogue of Kingman's subadditive ergodic theorem \cite{Ki} is also valid.

\begin{theorem}\label{subadd} Let $\a\colon \mathbb R_+\times X\to\mathbb R\cup\{-\infty\}$ be a measurable function such that $\a^+(t,x)\in L^1(X,\mu)$ for each $t$ and $$\a(t+s,x)\le\a(t,x)+\a(s,T^tx)$$ for all $s,t\in\mathbb R_+,$ $x\in X.$ Then there exists $T^t$-invariant function $\beta\colon X\to\mathbb R\cup\{-\infty\}$ such that $\beta^+\in L^1(X,\mu),$
$$\lim\limits_{t\to\infty}\frac{1}{t}\int\a(t,x)\,d\mu(x)=\inf\limits_{t}\frac{1}{t}\int\a(t,x)\,d\mu(x)=\int\beta(x)\,d\mu(x),$$
and for $\mu$-a.e. $x\in X,$ there exists the limit
$$\dlim\limits_{t\to\infty}\frac{\a(t,x)}{t}=\beta(x)$$
along the complements to subsets of the time axis of finite Lebesgue measure.
\end{theorem}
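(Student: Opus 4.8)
The plan is to reduce the continuous‑time assertion to Kingman's subadditive ergodic theorem \cite{Ki} for the time‑one map $T^1$, and then to control the fractional part of $t$ by the device from the proof of Theorem~\ref{add}, supplemented by one extra estimate in the lower‑bound direction.

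First I would apply the discrete Kingman theorem to the subadditive sequence $a_n(x)=\a(n,x)$ over the measure‑preserving transformation $T^1$ (subadditivity being immediate from $\a(n+m,x)\le\a(n,x)+\a(m,T^nx)$, with $a_1^+=\a^+(1,\cdot)\in L^1$). This gives a $T^1$‑invariant $\beta\colon X\to\mathbb R\cup\{-\infty\}$, $\beta^+\in L^1$, with $\a(n,x)/n\to\beta(x)$ $\mu$‑a.e.\ and $\int\beta\,d\mu=\inf_n\tfrac1n\int\a(n,\cdot)\,d\mu$. To upgrade $T^1$‑invariance to $\{T^t\}$‑invariance, fix $s\in(0,1)$: combining $\a(n+s,x)\le\a(s,x)+\a(n,T^sx)$ with $\a(n+1,x)\le\a(n+s,x)+\a(1-s,T^{n+s}x)$ and the Borel--Cantelli consequence $\a^+(1-s,T^{n+s}x)/n\to0$ a.e.\ (using $\a^+(1-s,\cdot)\in L^1$ and $\mu$‑invariance of $T^s$), one gets $\liminf_n\a(n,T^sx)/n\ge\beta(x)$; comparison with Kingman's theorem at the point $T^sx$ yields $\beta(T^sx)\ge\beta(x)$ a.e., and applying the same to $T^{1-s}(T^sx)$ together with $\beta\circ T^1=\beta$ forces $\beta(T^sx)=\beta(x)$. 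The set $\{\beta=-\infty\}$ is invariant and there $\a(t,x)\equiv-\infty$ from some time on, so the conclusion is trivial there; below I assume $\beta(x)$ finite, which also makes $\a(t,x)$ finite a.e.\ for every $t$. Finally, $\int\beta\,d\mu=\lim_{t\to\infty}\tfrac1t\int\a(t,\cdot)\,d\mu=\inf_{t>0}\tfrac1t\int\a(t,\cdot)\,d\mu$ follows from Fekete's lemma for the measurable, locally bounded (\cite{K}, p.~461), subadditive function $t\mapsto\int\a(t,\cdot)\,d\mu$, whose values at integers already converge to $\int\beta\,d\mu$.

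The core of the argument is the construction, for a.e.\ $x$, of a set $\tau(x)\subset\mathbb R_+$ of finite Lebesgue measure, depending measurably on $x$, off which $\a(t,x)/t\to\beta(x)$. For $t\in[n,n+1)$, subadditivity gives the two‑sided bound
$$\a(n+1,x)-\a^+\bigl(1-\{t\},\,T^{\{t\}}T^nx\bigr)\ \le\ \a(t,x)\ \le\ \a(n,x)+\a^+\bigl(\{t\},\,T^nx\bigr),$$
so it suffices to discard, for a sequence $\varepsilon_m\searrow0$, the sets $\Delta_n^\varepsilon(x)=\{t\in[n,n+1):\a^+(\{t\},T^nx)\ge\varepsilon n\}$ and $E_n^\varepsilon(x)=\{t\in[n,n+1):\a^+(1-\{t\},T^{\{t\}}T^nx)\ge\varepsilon n\}$. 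Using the $\mu$‑invariance of $T^n$ (and, for $E_n^\varepsilon$, of $T^{\{t\}}$) and the substitution $s=1-\{t\}$,
$$\int_X\lambda(\Delta_n^\varepsilon(x))\,d\mu=\int_0^1\mu\{\a^+(u,\cdot)\ge\varepsilon n\}\,du,\qquad\int_X\lambda(E_n^\varepsilon(x))\,d\mu=\int_0^1\mu\{\a^+(s,\cdot)\ge\varepsilon n\}\,ds,$$
and summing in $n$ via $\sum_{n\ge1}\mu\{g\ge\varepsilon n\}\le\varepsilon^{-1}\int g\,d\mu$ (valid for $g\ge0$) bounds both series by $\varepsilon^{-1}\int_0^1\|\a^+(s,\cdot)\|_1\,ds<\infty$, finite because $s\mapsto\|\a^+(s,\cdot)\|_1$ is subadditive, measurable, hence locally bounded. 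Thus $\sum_n\lambda(\Delta_n^\varepsilon(x))$ and $\sum_n\lambda(E_n^\varepsilon(x))$ are finite a.e.; for $t\in[n,n+1)$ outside $\Delta_n^\varepsilon(x)\cup E_n^\varepsilon(x)$ the two‑sided bound gives $\a(n+1,x)-\varepsilon t\le\a(t,x)\le\a(n,x)+\varepsilon t$, and dividing by $t$ (with $n/t,(n+1)/t\to1$ and $\a(n,x)/n,\a(n+1,x)/(n+1)\to\beta(x)$) yields $\beta(x)-\varepsilon\le\liminf\a(t,x)/t\le\limsup\a(t,x)/t\le\beta(x)+\varepsilon$ along $t\to\infty$ outside $\bigcup_n(\Delta_n^\varepsilon(x)\cup E_n^\varepsilon(x))$. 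The single set $\tau(x)$ of finite measure is then produced exactly by the diagonalization in the proof of Theorem~\ref{add}: the sets $\tau_m(x)=\bigcup_n(\Delta_n^{\varepsilon_m}(x)\cup E_n^{\varepsilon_m}(x))$ have finite measure, and one glues their tails using stopping times $k_m(x)=\min\{k:\lambda(\tau_m(x)\cap[k,\infty))\le2^{-m}\}$. (When $\beta(x)=-\infty$, the upper bound alone already gives $\a(t,x)/t\to-\infty$ off $\bigcup_n\Delta_n^1(x)$, which has finite measure.)

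I expect the main obstacle to be the lower‑bound term. In the additive case a single bad set controls both directions, but subadditivity only compares $\a(t,x)$ with a \emph{later} integer time, producing the ``backward increment'' $\a^+(1-\{t\},T^{\{t\}}T^nx)$ whose base point drifts with $t$ along the flow. A pointwise Markov estimate of $\lambda(E_n^\varepsilon(x))$ is too lossy — it yields only logarithmic growth of $\sum_{k\le n}\lambda(E_k^\varepsilon(x))$, i.e.\ density zero but possibly infinite measure — so the finiteness must be recovered by integrating in $x$ first and invoking $T^{\{t\}}$‑invariance of $\mu$, as in the display above; verifying that change of variables carefully is the one genuinely new point beyond Theorem~\ref{add}.
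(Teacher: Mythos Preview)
Your proposal is correct and follows essentially the same route as the paper: the same two-sided subadditive sandwich $\a([t]+1,x)-\a(1-\{t\},T^tx)\le\a(t,x)\le\a([t],x)+\a(\{t\},T^{[t]}x)$, the same pair of bad sets (your $\Delta_n^\varepsilon$ and $E_n^\varepsilon$ are the paper's $\Delta_{n,1}$ and $\Delta_{n,2}$), and the same Fubini/invariance estimate followed by the diagonalization of Theorem~\ref{add}. You spell out more than the paper does---the $T^t$-invariance of $\beta$, the change of variables for $E_n^\varepsilon$, and the $\beta=-\infty$ case---but the argument is the same.
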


\begin{proof} We have
\begin{equation}
\a([t]+1,x)-\a(1-\{t\},T^tx)\le\a(t,x)\le\a([t],x)+\alpha(\{t\},T^{[t]}x).\label{1}
\end{equation}
Taking into account that the subadditive function $t\mapsto\int\alpha^+(t,x)\,d\mu(x)$ is locally bounded, applying the arguments of Theorem \ref{add} to the sets
$$\Delta_{n,1}(x)=\bigl\{t\in[n,n+1):\a^+(\{t\},T^nx)\ge\varepsilon n\bigr\},$$
$$\Delta_{n,2}(x)=\bigl\{t\in[n,n+1):\a^+(1-\{t\},T^tx)\ge\varepsilon n\bigr\},$$
we find sets $\tau_{1,2}(x)$ of finite measure, on whose complements there is convergence
$$\frac{\a^+(\{t\},T^{[t]}x)}{t}\to0,\ \frac{\a^+(1-\{t\},T^tx)}{t}\to0$$
respectively. As the required set of density 0, we can take $\tau_1(x)\cup\tau_2(x).$ Indeed, by the subadditive ergodic theorem for discrete time, there exists the limit $\lim_{t\to\infty}\a([t],x)/t=:\beta(x)$ with $\beta^+\in L^1(X,\mu).$ Therefore,
$$\varlimsup\limits_{t\to\infty,\,t\notin\tau_1\cup\tau_2}\frac{\a(t,x)}{t}\le\varlimsup\limits_{t\to\infty,\,t\notin\tau_1}\frac{\a(t,x)}{t}\le\beta(x),$$ $$\varliminf\limits_{t\to\infty,\,t\notin\tau_1\cup\tau_2}\frac{\a(t,x)}{t}\ge\varliminf\limits_{t\to\infty,\,t\notin\tau_2}\frac{\a(t,x)}{t}\ge\beta(x).$$
 Since the function $t\mapsto\int\a(t,x)\,d\mu(x)$ is subadditive, there exists the limit $$\lim\limits_{t\to\infty}\frac{1}{t}\int\a(t,x)\,d\mu(x)=\inf\limits_{t}\frac{1}{t}\int\a(t,x)\,d\mu(x).$$ And from $\eqref{1}$ and the fact that the function $t\mapsto\int\a^+(t,x)d\mu(x)$ is locally bounded it follows that $$\int\beta(x)\,d\mu(x)=\lim\limits_{t\to\infty}\frac{1}{t}\int\a([t]+1,x)\,d\mu(x)\le\lim\limits_{t\to\infty}\frac{1}{t}\int\a(t,x)\,d\mu(x)\le$$
$$\le\lim\limits_{t\to\infty}\frac{1}{t}\int\a([t],x)\,d\mu(x)=\int\beta(x)\,d\mu(x).$$
\end{proof}

\section {Multiplicative ergodic theorem}

\medskip

The following theorem shows that the structure of the Oseledets invariant subspaces is preserved under our weaker integrability conditions for cocycles.

\begin{theorem}[multiplicative ergodic theorem]
Let $\A\colon\mathbb R\times X\to GL(m,\mathbb R)$ be a cocycle with $\ln^+\|\A(t,x)^{\pm1}\|\in L^1(X,\mu)$ for each $t\in\mathbb R$. Then for almost every~$x$
\begin{itemize}
\item[\textup{(i)}] there exists the limit $$\dlim\limits_{t\to\infty}(\A^*(t,x)\A(t,x))^{\frac{1}{2t}}=:\Lambda(x);$$
\item[\textup{(ii)}] there exists a measurable splitting $\mathbb R^m=\bigoplus\limits_{i=1}^{k(x)}U_i(x)$ such that $$U_i(T^tx)=\A(t,x)U_i(x)$$ and $$\dlim\limits_{t\to\pm\infty}\frac{1}{|t|}\ln\|\A(t,x)v\|=\pm\chi_i(x),\ v\in U_i(x)\setminus\{0\}$$ uniformly on $U_i(x)\setminus\{0\}.$
    The functions $k(x),$ $\chi_i(x),$ and $\dim U_i(x)$ are $T^t$-invariant and $\exp(\chi_i(x))$ are the eigenvalues of the matrix $\Lambda(x)$ with multiplicities $\dim U_i(x).$
\end{itemize}
Moreover, the convergence holds along the complements to time axis subsets of finite Lebesgue measure (which can be chosen measurably dependent on $x$).
\end{theorem}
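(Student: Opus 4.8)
The plan is to reduce the assertion to the two-sided discrete-time Oseledets theorem for the automorphism $T^1,$ together with an interpolation estimate of exactly the Borel--Cantelli type already exploited in the proof of Theorem~\ref{add}. First I would observe that setting $t=1$ in the hypothesis yields $\ln^+\|\A(1,x)^{\pm1}\|\in L^1(X,\mu),$ which is precisely the integrability condition under which the two-sided discrete multiplicative ergodic theorem applies to the cocycle $n\mapsto\A(n,x)$ over $T^1.$ This provides, for $\mu$-a.e.\ $x,$ the matrix limit $\Lambda(x)=\lim_{n\to\infty}(\A^*(n,x)\A(n,x))^{1/2n},$ a $T^1$-invariant splitting $\mathbb R^m=\bigoplus_{i=1}^{k(x)}U_i(x)$ with $\A(1,x)U_i(x)=U_i(T^1x),$ the genuine uniform limits $\frac1{|n|}\ln\|\A(n,x)v\|\to\pm\chi_i(x)$ as $n\to\pm\infty$ for $v$ in the unit sphere of $U_i(x),$ the identification of $\exp\chi_i(x)$ with the eigenvalues of $\Lambda(x)$ of multiplicities $\dim U_i(x),$ and the $T^1$-invariance of $k,\chi_i,\dim U_i.$ So no new ergodic theorem is needed; the whole issue is to pass from integer times to all times.

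The interpolation step is the core. For $n\le t<n+1$ I would write $\A(t,x)=\A(\{t\},T^nx)\A(n,x)$ and put
$$g_n(t,x):=\max\bigl(\ln^+\|\A(\{t\},T^nx)\|,\ \ln^+\|\A(\{t\},T^nx)^{-1}\|\bigr).$$
This gives at once the vector bound $\bigl|\ln\|\A(t,x)v\|-\ln\|\A(n,x)v\|\bigr|\le g_n(t,x)$ for $v\ne0,$ and, from the Loewner sandwich $e^{-2g_n(t,x)}\A^*(n,x)\A(n,x)\le\A^*(t,x)\A(t,x)\le e^{2g_n(t,x)}\A^*(n,x)\A(n,x)$ combined with operator monotonicity of the logarithm, the matrix bound $\bigl\|\ln(\A^*(t,x)\A(t,x))-\ln(\A^*(n,x)\A(n,x))\bigr\|\le2g_n(t,x).$ Since $\ln^+\|BA\|\le\ln^+\|B\|+\ln^+\|A\|,$ the functions $t\mapsto\int_X\ln^+\|\A(t,x)^{\pm1}\|\,d\mu$ are measurable and subadditive, hence locally bounded, so the computation from the proof of Theorem~\ref{add} transfers word for word with $|\a(\{t\},T^nx)|$ replaced by $g_n(t,x)$: the sets $\Delta_n^\varepsilon(x)=\{t\in[n,n+1):g_n(t,x)\ge\varepsilon n\}$ satisfy $\sum_n\int_X\lambda(\Delta_n^\varepsilon(x))\,d\mu(x)<\infty,$ whence $\sum_n\lambda(\Delta_n^\varepsilon(x))<\infty$ almost everywhere; and gluing these over a sequence $\varepsilon_j\downarrow0$ with the same cut-offs at indices $k_j(x)$ as in Theorem~\ref{add} produces a set $\tau^+(x)\subset\mathbb R_+$ of finite Lebesgue measure, measurable in $x,$ along whose complement $g_{[t]}(t,x)/t\to0$ as $t\to+\infty.$ Running the same argument for the flow $\{T^{-t}\}$ and the cocycle $\A(-t,x)$ yields $\tau^-(x)\subset\mathbb R_-,$ and I would take $\tau(x):=\tau^+(x)\cup\tau^-(x),$ which is of finite measure on each half-line.

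With $\tau(x)$ fixed the conclusions come quickly. Along $t\notin\tau(x)$ the vector bound upgrades the integer-time limits to $\frac1{|t|}\ln\|\A(t,x)v\|\to\pm\chi_i(x)$ as $t\to\pm\infty,$ uniformly on the unit sphere of $U_i(x)$ because $g_{[t]}(t,x)$ does not depend on $v;$ the flow-equivariance $U_i(T^tx)=\A(t,x)U_i(x)$ follows because $\A(t,x)U_i(x)$ is $\A(1,\cdot)$-invariant over the orbit of $T^tx$ and, since $\tfrac1n\ln^+\|\A(t,T^nx)^{\pm1}\|\to0$ almost everywhere by the Birkhoff theorem, its vectors have time-one exponential rate $\chi_i(x)$ at $T^tx,$ so uniqueness of the Oseledets splitting at $T^tx$ forces the equality (and hence the $T^t$-invariance of $k,\chi_i,\dim U_i$). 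The matrix bound, together with $\tfrac1{2[t]}\ln(\A^*([t],x)\A([t],x))\to\ln\Lambda(x)$ and $[t]/t\to1,$ gives $\tfrac1{2t}\ln(\A^*(t,x)\A(t,x))\to\ln\Lambda(x),$ i.e.\ after applying $\exp,$ $\dlim\limits_{t\to\infty}(\A^*(t,x)\A(t,x))^{1/2t}=\Lambda(x)$ along the same $\tau(x);$ and $\exp\chi_i(x)$ are the eigenvalues of $\Lambda(x)$ with multiplicities $\dim U_i(x)$ because this is already true for the discrete limit.

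The main obstacle I anticipate is part (i): one must be careful that although the one-step factor $\A(\{t\},T^{[t]}x)$ is not bounded as an operator, it is negligible on the logarithmic scale after division by $t$ strongly enough to displace the whole symmetric positive-definite matrix $(\A^*(t,x)\A(t,x))^{1/2t}$ and not merely its spectrum --- this is exactly what the Loewner comparison and the operator monotonicity of $\ln$ deliver. Everything else, in particular arranging that one and the same set $\tau(x)$ of finite measure serves (i), (ii) and both time directions, is routine bookkeeping on top of Theorem~\ref{add}.
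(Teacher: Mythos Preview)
Your proposal is correct and follows the same strategy the paper only sketches: reduce to the discrete-time Oseledets theorem and interpolate to real $t$ by rerunning the Borel--Cantelli argument of Theorem~\ref{add} with $\ln^+\|\A(\{t\},T^{[t]}x)^{\pm1}\|$ in place of $|\alpha(\{t\},T^{[t]}x)|$. The only differences are organizational --- the paper routes through the one-sided version (ii$'$) and then combines the two time directions ``as for discrete time,'' whereas you invoke the two-sided discrete theorem directly --- and your Loewner-order/operator-monotone-logarithm treatment of~(i) makes explicit a step the paper leaves to the reader.
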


This theorem is deduced, as for discrete time, from its one-sided version, in which (ii) is replaced by
{\it
\begin{itemize}
\item[\textup{(ii')}] $$\dlim\limits_{t\to\infty}\frac{1}{t}\ln\|\A(t,x)v\|=\chi_i(x),\ v\in V_i(x)\setminus V_{i+1}(x),$$
    $$V_i(T^tx)=\A(t,x)V_i(x),$$
    $$V_i(x)=\bigoplus\limits_{j=i}^{k(x)}W_j(x),$$ where $W_j(x)$ are the eigenspaces of the operator $\Lambda(x)$ corresponding to its eigenvalues $\exp(\chi_1(x))\ge\ldots\ge\exp(\chi_{k(x)}(x))$.
\end{itemize}}

\medskip

Together with $(i)$ this condition is an analogue of the Lyapunov regularity and, as was noted by V.A. Kaimanovich \cite{K87}, is equivalent to the existence of a positive definite symmetric matrix $\Lambda(x)$ such that
$$\dlim_{t\to\infty}\frac{1}{t}\ln\|(A(t,x)\Lambda^{-t}(x))^{\pm1}\|=0\mbox{ a.e.}$$
If not all $\chi_i(x)$ are equal to 0, then the last condition means the proximity of the trajectory of the inverse cocycle $A(t,x)^{-1}p$ to the geodesics $\gamma(\theta(x)t,x)=\Lambda^{-t}(x)p$ in the symmetric space $GL(m,\mathbb R)/O(m)$ (for which $p=O(m)$) with the corresponding metrics $\rho$ \cite{K87}:
$$\dlim_{t\to\infty}\frac{1}{t}\rho(A(t,x)^{-1}p,\gamma(\theta(x)t,x))=0.$$
(In this case, the trajectory $A(t,x)^{-1}p$ tends to a random point of the boundary at infinity $GL(m,\mathbb R)/O(m)(\infty).$) A more general statement --- a version of the Karlsson-Margulis theorem \cite{KM} --- is also true. We restrict ourselves to the ergodic case.

\begin{theorem} Let $(Y,\rho)$ be a uniformly convex, Busemann nonpositively curved, complete metric space $(Y,\rho);$ and let $A\colon \mathbb R_+\times X\to G$ be an ``inverse'' cocycle, i.e.,
$$A(t+s,x)=A(t,x)A(s,T^tx),$$ over an ergodic semiflow $\{T^t\}$
 with values in semigroup of nonexpanding maps of $Y$. Suppose that for a fixed point $p\in Y,$
$$\rho(A(t,x)p,p)\in L^1(X,\mu)$$ holds for all $t$. Then for almost every $x$ there exists the limit
\begin{equation}\dlim_{t\to\infty}\frac{1}{t}\rho(A(t,x)p,p)=:\theta,\label{v}\end{equation}
 and if $\theta>0,$ then there exists a unique geodesics $\gamma$ in $Y,$ depending on $x,$ with $\gamma(0,x)=p,$ such that
\begin{equation}\dlim_{t\to\infty}\frac{1}{t}\rho(A(t,x)p,\gamma(\theta t,x))=0 \mbox{ a.e.}\label{l}\end{equation}
Moreover, the convergence is fulfilled along the complements to time axis subsets of finite Lebesgue measure.
\end{theorem}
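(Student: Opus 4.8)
The plan is to push the problem down to integer times, invoke the discrete-time Karlsson--Margulis theorem \cite{KM} there, and then control the fractional-part remainder with the very mechanism that proves Theorems \ref{add} and \ref{subadd}.

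First I would extract the subadditive structure hidden in the hypothesis. Put $c(t,x):=\rho(A(t,x)p,p)$. Since $A(t+s,x)p=A(t,x)(A(s,T^tx)p)$ and each $A(t,x)$ is nonexpanding, $c(t+s,x)\le\rho(A(t,x)(A(s,T^tx)p),A(t,x)p)+\rho(A(t,x)p,p)\le c(s,T^tx)+c(t,x)$, so $c$ is a nonnegative subadditive cocycle with $c(t,\cdot)=c^+(t,\cdot)\in L^1(X,\mu)$ for each $t$. Applying Theorem \ref{subadd} to $c$ gives a $T^t$-invariant function $\beta$ with $\dlim_{t\to\infty}c(t,x)/t=\beta(x)$ along complements of time sets of finite Lebesgue measure; ergodicity of the semiflow forces $\beta\equiv\theta$ to be constant, which is precisely \eqref{v} together with the finite-measure refinement. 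Along the way, $t\mapsto\int_X c(t,x)\,d\mu(x)$ is measurable and subadditive, hence locally bounded (\cite{K}, p.~461); set $M:=\sup_{0\le s\le1}\int_X c(s,x)\,d\mu(x)<\infty$.

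Suppose now $\theta>0$. I would apply the discrete Karlsson--Margulis theorem \cite{KM} to the cocycle $n\mapsto A(n,x)$ over the automorphism $T^1$; the integrability it needs is $c(1,\cdot)\in L^1$, which holds by hypothesis, and if $T^1$ fails to be ergodic one argues on its ergodic components, on each of which the drift is the a.e.\ constant $\theta$. This produces, for a.e.\ $x$, a unit-speed geodesic ray $s\mapsto\gamma(s,x)$ with $\gamma(0,x)=p$, depending measurably on $x$, such that $\frac{1}{n}\rho(A(n,x)p,\gamma(\theta n,x))\to0$ as $n\to\infty$. (Uniform convexity of $Y$ is used here, inside \cite{KM}.) To interpolate, for $n:=[t]$ and $s:=\{t\}$ the triangle inequality gives
$$\rho(A(t,x)p,\gamma(\theta t,x))\le\rho(A(t,x)p,A(n,x)p)+\rho(A(n,x)p,\gamma(\theta n,x))+\rho(\gamma(\theta n,x),\gamma(\theta t,x)).$$
After dividing by $t$: the middle term tends to $0$ by the discrete statement and $n/t\to1$; the last equals $\theta(t-n)\le\theta$, hence is $o(t)$; and, since $A(t,x)=A(n,x)A(\{t\},T^nx)$ and $A(n,x)$ is nonexpanding, the first is at most $\rho(A(\{t\},T^{[t]}x)p,p)$. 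So everything reduces to showing that $\rho(A(\{t\},T^{[t]}x)p,p)/t\to0$ along the complement of a set $\tau(x)$ of finite measure chosen measurably in $x$.

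For that last point I would reuse verbatim the argument of Theorem \ref{add}: with $\Delta_n^{\varepsilon}(x):=\{t\in[n,n+1):\rho(A(\{t\},T^nx)p,p)\ge\varepsilon n\}$ one has, by Tonelli and $T^n$-invariance of $\mu$,
$$\sum_{n\ge1}\int_X\lambda(\Delta_n^{\varepsilon}(x))\,d\mu(x)=\sum_{n\ge1}\int_0^1\mu\{x:\rho(A(s,x)p,p)\ge\varepsilon n\}\,ds\le\frac{M}{\varepsilon}<\infty,$$
so $\sum_n\lambda(\Delta_n^{\varepsilon}(x))<\infty$ for a.e.\ $x$; assembling the sets $\Delta_n^{\varepsilon_j}(x)$ for a sequence $\varepsilon_j\downarrow0$ exactly as in the proof of Theorem \ref{add} yields the required $\tau(x)$, and hence \eqref{l} along its complement. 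For uniqueness: a second geodesic ray $\gamma'$ from $p$ with the $\dlim$-tracking property agrees with $\gamma$ along an unbounded sequence $t_k$ avoiding both exceptional sets, so $\rho(\gamma(\theta t_k,x),\gamma'(\theta t_k,x))=o(t_k)$, and since $s\mapsto\rho(\gamma(s,x),\gamma'(s,x))$ is convex (Busemann nonpositive curvature) and vanishes at $s=0$, it vanishes identically. The one delicate ingredient is the fractional-part term $\rho(A(\{t\},T^{[t]}x)p,p)/t$, and since it is handled by precisely the device of Theorems \ref{add}--\ref{subadd}, I anticipate no essential obstacle beyond the bookkeeping of measurability of $\tau(x)$ and of the possible non-ergodicity of $T^1$.
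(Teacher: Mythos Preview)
Your proposal is correct and follows essentially the same route as the paper's proof: reduce \eqref{v} to Theorem~\ref{subadd} via the subadditivity of $c(t,x)=\rho(A(t,x)p,p)$, obtain the geodesic from the discrete-time Karlsson--Margulis theorem, and bridge to continuous time with the triangle inequality together with the nonexpanding bound $\rho(A(t,x)p,A([t],x)p)\le\rho(A(\{t\},T^{[t]}x)p,p)$, the last quantity being handled by the $\Delta_n^{\varepsilon}$ mechanism of Theorem~\ref{add}. You supply more detail than the paper does---the explicit check of subadditivity, the possible non-ergodicity of $T^1$, and the convexity argument for uniqueness of $\gamma$---but the architecture is identical.
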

\begin{proof}
Theorem \ref{subadd} implies the existence of the limit \eqref{v}. Statement \eqref{l} follows from the discrete time theorem, the inequalities
$$\rho\bigl(A(t,x)p,\gamma(\theta t)\bigr)\le\rho\bigl(A(t,x)p,A([t],x)\bigr)+\rho\bigl(A([t],x)p,\gamma(\theta[t])\bigr)+
\rho\bigl(\gamma(\theta[t]),\gamma(\theta t)\bigr),$$
$$\rho\bigl(A(t,x)p,A([t],x)p\bigr)=\rho\bigl(A([t],x)A(\{t\},T^{[t]}x)p,A([t],x)p\bigr)\le$$ $$\le\rho\bigl(A(\{t\},T^{[t]}x)p,p\bigr),$$
and the existence of the limit
$$\dlim_{t\to\infty}\frac{1}{t}\rho(A(\{t\},T^{[t]}x)p,p)=0.$$
The latter was in fact already used in applying Theorem \ref{subadd}.
\end{proof}

\begin{rem} The infinite-dimensional operator versions of the MET for convergence in density are also valid. Proofs in the spirit of Raghunathan can be carried out by choosing the naturally arising countable collection of ``bad'' sets of density 0 so that the series of their measures converges.
\end{rem}

\medskip

\noindent Maxim E. Lipatov\\
Dept. of Mechanics and Mathematics\\
Lomonosov Moscow State University \\
Main Building, 1 Leninskiye Gory\\
Moscow 119991\\
RUSSIA

\medskip

\noindent\textit{E-mail:} \texttt{maxim.lipatov@gmail.com}
\end{document}